\def \fT{\mathfrak{T}}
\renewcommand*{\backref}[1]{}
\renewcommand*{\backrefalt}[4]{%
    \ifcase #1 (Not cited.)%
    \or        (p.\,#2)%
    \else      (pp.\,#2)%
    \fi}
\definecolor{olive}{rgb}{0.3, 0.4, .1}
\definecolor{dgreen}{rgb}{0.,0.6,0.}
\newtheorem{theorem}{Theorem}[section]
\newtheorem{lemma}[theorem]{Lemma}
\newtheorem{proposition}[theorem]{Proposition}
\theoremstyle{definition}
\theoremstyle{remark}
\numberwithin{equation}{section}
\newcommand{\li}{\textnormal{li}}
\def\({\left(}
\def\){\right)}
\begin{document}



\title[Counting integers with a smooth totient]
{Counting integers with a smooth totient}

\author[Banks]{W. D. Banks}
\address{Department of Mathematics, University of Missouri, Columbia, MO 65211, USA}
\email{bankswd@math.missouri.edu}

\author[Friedlander]{J. B. Friedlander}
\address{Department of Mathematics, University of Toronto, Toronto, Ontario M5S 2E4, Canada}
\email{frdlndr@math.utoronto.ca}
 
\author[Pomerance]{C. Pomerance}
\address{Mathematics Department, Dartmouth College, Hanover, NH 03755, USA}
\email{carl.pomerance@dartmouth.edu}

\author[Shparlinski]{I. E. Shparlinski}
\address{Department of Pure Mathematics, University of New South Wales, Sydney, NSW 2052, Australia}
\email{igor.shparlinski@unsw.edu.au}

\date{\today}

\begin{abstract} We fix a gap in our proof of an upper bound for the number of 
positive 
integers $n\le x$ for which the Euler function $\varphi(n)$ has all prime factors at most $y$. 
While doing this we obtain a stronger, likely best-possible result. 
\end{abstract}

\keywords{Euler function, smooth numbers}
\subjclass[2010]{11N25,  11N37}

\maketitle

\section{Introduction}
Our paper~\cite{BFPS} considers various multiplicative problems related to
Euler's function $\varphi$.  One of these problems concerns the distribution
of integers $n$  for which $\varphi(n)$ is $y$-smooth (or $y$-friable), meaning
that all prime factors of $\varphi(n)$ are at most $y$.  We recall 
that~\cite[Theorem~3.1]{BFPS} asserts that the following bound holds on the quantity
$\Phi(x,y)$ defined to be the number of $n\le x$ such that $\varphi(n)$
is $y$-smooth. 
\begin{quotation}
{\it For any fixed $\varepsilon>0$, numbers $x,y$ with
$y\ge(\log\log x)^{1+\varepsilon}$, and $u=\log x/\log y\to\infty$, we have the bound
$
\Phi(x,y)\le x/\exp((1+o(1))u\log\log u).
$
}
\end{quotation}

Paul Kinlaw has brought to our attention a  flaw in our argument.
 Specifically, in the two-line display near the end of the proof,
our upper bound on the sum $\sum_{p\le y}p^{-c}$ is incorrect for the larger
values of $y$ in our range. 

The purpose of this note is to provide a complete proof of a somewhat stronger
version of~\cite[Theorem~3.1]{BFPS}.  Merging
Propositions~\ref{prop:largeu} and~\ref{prop:smallu}
below we prove the following result.

\begin{theorem}
\label{thm:main}
For any fixed $\varepsilon>0$,  numbers $x,y$
with $y\ge(\log\log x)^{1+\varepsilon}$, and
$u=\log x/\log y\to\infty$, we have
$$
\Phi(x,y)\le x\exp\bigl(-u(\log\log u+\log\log\log u +o(1))\bigr).
$$
\end{theorem}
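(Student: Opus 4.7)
The plan is to reduce the problem to counting certain structured integers and then apply a Rankin-type moment bound. Each $n$ counted by $\Phi(x,y)$ factors uniquely as $n = mb$ with $m$ the $y$-smooth part of $n$ and $\gcd(m,b)=1$; the condition that $\varphi(n)=\varphi(m)\varphi(b)$ be $y$-smooth is equivalent to $b$ being squarefree with every prime factor in
$$
\PP = \{\,p > y : p-1 \text{ is } y\text{-smooth}\,\}.
$$
Writing $\mathcal{B}$ for the set of squarefree integers whose prime factors all lie in $\PP$, we obtain
$$
\Phi(x,y) \le \sum_{\substack{b \in \mathcal{B}\\ b \le x}} \Psi(x/b,\,y),
$$
which reduces the problem to controlling the counting function of $\mathcal{B}$.

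The decisive input is a pointwise upper bound for $\pi_\PP(t) := |\PP\cap[1,t]|$, obtained from the trivial inclusion $\pi_\PP(t) \le \Psi(t,y)$ and classical Hildebrand--Tenenbaum bounds for $\Psi$, which in the range $y\ge(\log\log x)^{1+\varepsilon}$ supply a saving of roughly $v^{-v}$ with $v=\log t/\log y$. Partial summation converts this into estimates for $\sum_{p\in\PP,\,p\le t} p^{-s}$, and the Rankin inequality
$$
|\{b\in\mathcal{B} : b\le t\}| \le t^s \prod_{\substack{p\in\PP\\ p\le t}}(1+p^{-s})
$$
then yields the desired bound on $\mathcal{B}$ after an optimal choice $s = 1 - \delta$ with $\delta$ of order roughly $1/\log u$. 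Substituting back and executing the sum over $m$ gives the theorem. Because the precise bookkeeping differs substantially for ``large'' $u$ (where $\pi_\PP$ is so sparse that crude arguments suffice) and ``small'' $u$ (near the boundary $y\sim(\log\log x)^{1+\varepsilon}$, where finer Mertens-type bounds are needed), the argument is cleanly split, accounting for Propositions~\ref{prop:largeu} and~\ref{prop:smallu}.

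The main obstacle is achieving the precise exponent $u(\log\log u + \log\log\log u + o(1))$ rather than the cruder $u\log\log u$. This requires (i) optimizing the Rankin parameter $s$ to just the right order, and (ii) sharply controlling the contribution from primes $p$ only slightly larger than $y$, where the density of $\PP$ is closest to that of all primes and the Rankin sum is most delicate. The hypothesis $y\ge(\log\log x)^{1+\varepsilon}$ is precisely what is needed to keep the smooth-number saving nontrivial uniformly across the relevant range of $t$ and to make the Mertens-type sum $\sum_{p\in\PP,\,p\le x} 1/p$ convergent at the appropriate rate; without it, the error terms in the Hildebrand--Tenenbaum estimates would swamp the savings we are trying to exploit.
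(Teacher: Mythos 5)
Your decomposition $n=mb$ (with $m$ the $y$-smooth part and $b$ squarefree with prime factors in $\PP$) is correct and is in spirit equivalent to the Euler--product/Rankin bound used in the paper's Proposition~\ref{prop:largeu}. However, there are three substantive gaps.

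First, the pointwise input $\pi_\PP(t)\le\Psi(t,y)$ is too weak. The paper's Lemma~\ref{lem:Smooth Shift} (Pomerance--Shparlinski) gives $\psi_\pi(t,y)\ll \rho(u_t)\,t/\log y$, which beats the trivial $\Psi(t,y)\asymp t\rho(u_t)$ by a factor $\log y$. After partial summation, that extra $\log y$ propagates into the Rankin sum $\sum_{p\in\PP,\,p\le x}p^{-s}$; one then needs $\log y\cdot u^{1-\delta'}=o(u)$ for some small $\delta'$, which fails badly near the upper end $y\approx x^{1/\log_2 x}$ of the Proposition~\ref{prop:largeu} range, where $\log y$ is of order $\log x/\log_2 x\gg u^{\delta'}$. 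Without the $1/\log y$ saving from the shifted-prime smoothness count, the Rankin step does not close.

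Second, the stated Rankin parameter $s=1-\delta$ with $\delta\asymp 1/\log u$ does not produce the target exponent. Writing $x^{s}=x\exp(-(1-s)u\log y)$, matching $\exp(-u(\log_2u+\log_3u+o(1)))$ forces $1-s=(\log_2u+\log_3u+O(1))/\log y$, which is precisely the paper's choice of $c$ in Proposition~\ref{prop:largeu}; taking $1-s\asymp1/\log u$ gives $(1-s)\log y\asymp\log y/\log u$, which in general is not $\log_2u+\log_3u$.

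Third, and most seriously, you have the two regimes reversed and propose a single Rankin framework for both. The regime $y\sim(\log_2x)^{1+\varepsilon}$ is the \emph{large}-$u$ regime (handled by Proposition~\ref{prop:largeu}); ``small $u$'' means large $y$, namely $y\ge\exp(\sqrt{\log x\log_2x})$ (Proposition~\ref{prop:smallu}). In that large-$y$ regime, no amount of Mertens-style refinement of a Rankin-type argument will deliver the sharp exponent $u(\log_2u+\log_3u+o(1))$; this is exactly the regime where Rankin's method is known to be lossy for smooth numbers. The paper instead proves an analog of the Hildebrand identity (Lemma~\ref{lem:ajh}) and runs an iteration over intervals $N<u\le N+1$, controlling $\Phi(x,y)$ by the supremum of $\Phi(t,y)/(tg(v))$ for smaller $v$. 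That recursive mechanism has no counterpart in your outline, so the ``small $u$'' half of the theorem is not reached by your proposal.
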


One might wonder about a matching lower bound for $\Phi(x,y)$,
 but this  is very difficult to
achieve since it depends on the distribution of primes $p$ with $p-1$ being $y$-smooth.
Let $\psi(x,y)$ denote the number of $y$-smooth integers at most $x$, and let
$\psi_\pi(x,y)$ denote the number of primes $p\le x$ such that $p-1$ is $y$-smooth.
It has been conjectured (see~\cite{PS} and the discussion therein) that in a wide range
one has $\psi_\pi(x,y)/\pi(x)\sim\psi(x,y)/x$.  Assuming a weak form of this conjecture,
Lamzouri~\cite{L} has shown 
that there
is a continuous monotonic function $\sigma(u)$ such that
$$
\sigma(u)=\exp\bigl(-u(\log\log u+\log\log\log u+o(1))\bigr)\qquad (u\to\infty),
$$
and such that $\Phi(x,x^{1/u})\sim \sigma(u)x$ as $x\to\infty$ with $u$ bounded.
The function $\sigma$ is explicitly identified as the solution to the integral equation
$$
u\sigma(u)=\int_0^u\sigma(u-t)\rho(t)\,dt,
$$
where $\rho$ is the Dickman--de Bruijn function.  

In light of Lamzouri's theorem, it may be that we have equality in Theorem \ref{thm:main}.

Our proof of Theorem~\ref{thm:main} is given as two results:
Proposition~\ref{prop:largeu} for the case
when $y\le x^{1/\log\log x}$ and  Proposition~\ref{prop:smallu}   
for the case when $y\ge \exp(\sqrt{\log x\log\log x}\,)$.

Note that the ranges of Propositions~\ref{prop:largeu} 
and~\ref{prop:smallu} have a significant overlap. 
In the first range we use a variant of Rankin's trick.  In the second range
we use a variant of the Hildebrand approach~\cite{H} for estimating $\psi(x,y)$.

Our proof is adaptable to multiplicative
functions similar in structure to Euler's $\varphi$-function.
For example, in~\cite{Po} a version of our theorem is used
for the distribution of squarefree $n\le x$ with $\sigma(n)$ being $y$-smooth,
where $\sigma$ is the sum-of-divisors function.
 
In a recent 
paper, Pollack~\cite{P1} shows (as a special case)
that for any fixed number $\alpha>1$,
$$
\Phi(x,(\log x)^{1/\alpha})\le x^{1-(\alpha+o(1))\log\log\log x/\log\log x}
$$
as $x\to\infty$.  A slightly stronger inequality follows from our Theorem~\ref{thm:main}, though in Pollack's result
the inequality applies to sets more general than the $(\log x)^{1/\alpha}$-smooth integers.

Our paper~\cite{BFPS} also considered the distribution of integers $n$
for which $\varphi(n)$ is a square and the distribution of squares
in the image of $\varphi$. These results have attracted interest and
since then have been improved and extended in various ways;
see~\cite{F,FP,P2,PP}.

In what follows, $P(n)$ denotes
the largest prime factor of an integer $n>1$, and $P(1)=1$.
The letter $p$ always denotes a prime number; the letter $n$ always denotes
a positive integer.
As usual in the subject,
we write $\log_kx$ for the $k$th iterate of the natural logarithm,
assuming that the argument is large enough for the expression to make sense.

We use the notations $U = O(V)$  and $U \ll V$ in  their standard
meaning that $|U| \le cV$ for some constant $c$, which throughout this paper may depend
on the real positive parameters $\varepsilon$, $\delta$ $\eta$. 
We also use the notations $U \sim V$  and $U =o(V)$ to indicate that 
$U/V \to 1$ and $U/V \to 0$, respectively, when certain
(explicitly indicated) parameters tend to infinity.

\section{Small $y$}

\subsection{Dickman--de Bruijn function}

 As above, we denote by $\rho$ the Dickman--de Bruijn function;
we refer the reader to~\cite{HildTen} for an exact definition and properties.
For the first range it is useful to have the following two estimates
involving this function. 

\begin{lemma}
\label{lem:useful}
Let $\eta>0$ be arbitrarily small but fixed. 
For $A\ge2$ we have
$$
\sum_{n\ge 1}A^n\rho(n)\ll \exp\left(\frac{(1+\eta)A}{\log A}\right).
$$
\end{lemma}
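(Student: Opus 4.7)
The plan is to exploit the sharp asymptotic for $\rho(n)$ to bound the maximum term in the sum and then absorb the number of significant terms into the error. Recall (see~\cite{HildTen}) that for any fixed $\delta>0$ there exists $n_0=n_0(\delta)$ such that for all $n\ge n_0$,
$$
\rho(n)\le\exp\bigl(-n\log n-n\log_2 n+(1+\delta)n\bigr).
$$
The second-order term $\log_2 n$ is crucial: the cruder estimate $\rho(n)\le e^n n^{-n}$ would place the relevant $n$ near $A/e$ and yield a bound of size $\exp((e^{-1}+o(1))A)$, well short of the target.

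Given this input, for $n\ge n_0$ one has $A^n\rho(n)\le\exp(h(n))$ with
$$
h(n)=n\log A-n\log n-n\log_2 n+(1+\delta)n,
$$
which I would study as a function of a real variable. Its derivative vanishes at a unique point $n_*$ satisfying $\log n_*+\log_2 n_*=\log A+\delta-1/\log n_*$, so $n_*=(1+o(1))A/\log A$ as $A\to\infty$. Substituting back and using the critical-point relation gives
$$
h(n_*)=n_*\Bigl(1+\frac{1}{\log n_*}\Bigr)=(1+o(1))\,\frac{A}{\log A},
$$
and concavity of $h$ makes this the global maximum.

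To finish I would split the sum into three pieces. The contribution from $1\le n<n_0$ is at most $n_0 A^{n_0}$, a polynomial in $A$. For $n_0\le n\le 2A$ there are $O(A)$ terms, each at most $e^{h(n_*)}$. For $n>2A$ one has $\log A-\log n<-\log 2$, and combined with the $-n\log_2 n$ contribution this forces $h(n)\le-n\log 2$ for $A$ large, so these terms sum to $O(1)$. Altogether
$$
\sum_{n\ge 1}A^n\rho(n)\ll A\exp\bigl((1+o(1))A/\log A\bigr),
$$
and since $\log A=o(A/\log A)$ as $A\to\infty$ the right-hand side is bounded by $\exp((1+\eta)A/\log A)$ once $A$ is sufficiently large; for $A$ in any bounded range the sum converges and the inequality holds trivially with the implicit constant. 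The main delicate point is precisely the use of the $\log_2 n$ correction in the asymptotic for $\rho$; once this sharper input is in hand, the optimization and summation become routine and the $O(A)$ polynomial factor is absorbed into the arbitrary $\eta$.
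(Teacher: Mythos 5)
Your proof is correct and takes essentially the same route as the paper's: both rely on de Bruijn's asymptotic for $\rho$, crucially including the $\log_2 n$ term, and both optimize the exponent $n(\log A-\log n-\log_2 n+O(1))$ near its maximum around $n\approx A/\log A$, then dispose of the small-$n$ range with a crude bound and of the large-$n$ range by the rapid decay, absorbing the remaining polynomial factor into $\eta$. One small bookkeeping point: with $\delta>0$ fixed, the critical point is really $n_*=(e^{\delta}+o(1))A/\log A$ and $h(n_*)=(e^{\delta}+o(1))A/\log A$ rather than $(1+o(1))A/\log A$, so you need to explicitly choose $\delta$ small relative to $\eta$ before the final absorption; this is a cosmetic fix, and the paper sidesteps it by using the favorable sign of the $(\log_2 t-1)/\log t$ term in~\eqref{eq:rho} to work with the constant $1$ directly (and by using $\rho\le 1$ for $n\le A/(\log A)^2$ instead of a constant cutoff $n_0$).
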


\begin{proof}
It is sufficient to prove the result for large numbers $A$.
Since $\rho(n)\le 1$, the sum up to $A/(\log A)^2$ is $\ll\exp(A/\log A)$,
hence we need only consider integers $n>A/(\log A)^2$.
We have for $t>1$,
\begin{equation}
\label{eq:rho}
\rho(t)=\exp\(-t\(\log t+\log_2 t -1+\frac{\log_2t-1}{\log t}
+O\(\frac{(\log_2t)^2}{(\log t)^2}\)\)\);
\end{equation}
see for example de Bruijn~\cite[(1.5)]{dB2}.
Consequently, if $n> A/(\log A)^2$ and $A$ is large enough, then
$$
A^n\rho(n)<\exp(n(\log A-\log n-\log_2 n+1)).
$$
In the case $n> A$, this implies that
$$
A^n\rho(n) <\exp(-n\log_2 n +n)<\exp(-n),
$$
and so the contribution to the sum when $n> A$ is $O(1)$.
Now assume that $A/(\log A)^2 < n \le A$.
Let $f(t)=t(\log A-\log t-\log_2 t+1)$.
For any $\theta\ge 1/\log A$ one sees that
\begin{align*}
f\left(\frac{\theta A}{\log A}\right) & =
\frac{\theta A}{\log A} \left(-\log \theta + \log_2 A- \log_2 \left(\frac{\theta A}{\log A}\right) +1 \right)\\
& =-\frac{\theta A}{\log A} (\log \theta+C_{A,\theta}),
\end{align*} 
where  
$$
C_{A,\theta}=\log\left(\frac{\log A+\log\theta-\log_2 A}{\log A}\right)-1.
$$
Hence, when $A$ is large enough depending on the choice of $\eta$, we have
$$
f\left(\frac{\theta A}{\log A}\right)
\le-\frac{\theta A}{\log A} (\log \theta-(1+\eta/2))\qquad(\theta> 1/\log A).
$$
Since this last expression reaches a maximum when $\theta=e^{\eta/2}$, we have
$f(t)\le e^{\eta/2}A/\log A<(1+3\eta/4)A/\log A$ for all $t> A/(\log A)^2$, and so
$$
\sum_{A/(\log A)^2< n\le A}A^n\rho(n)
< A\exp\left(\frac{(1+3\eta/4)A}{\log A}\right)\ll \exp\(\frac{(1+\eta)A}{\log A}\),
$$
which completes the proof of the lemma.
\end{proof}

To prove the main results of this paper, we need information about the distribution of
primes $p$ with $p-1$ suitably smooth.  The following statement, which 
is~\cite[Theorem~1]{PS} (see also~\cite[Equation~(2.3)]{BFPS}), suffices for our purposes.

\begin{lemma}
\label{lem:Smooth Shift}
For $\exp(\sqrt{\log t \log_2 t}\,)\le y\le t$ and with
$u_t=\log t/\log y$ we have
$$
\psi_\pi(t,y)=\sum_{\substack{p\le t\\P(p-1)\le y}}1
\ll u_t\rho\(u_t\)\frac{t}{\log t}=\rho(u_t)\frac{t}{\log y}.
$$
\end{lemma}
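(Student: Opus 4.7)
The plan is to use a sieve-theoretic reformulation rather than a direct prime-counting argument. Each prime $p$ counted by $\psi_\pi(t,y)$ has the form $p=m+1$ with $m\le t-1$ a $y$-smooth integer, and if in addition $p>y$ then the primality of $m+1$ forces $m\not\equiv-1\pmod q$ for every prime $q\le y$. After absorbing the $O(y)$ primes $p\le y$ into an error term, the task reduces to counting $y$-smooth $m\le t$ that avoid the residue classes $-1\pmod q$ as $q$ ranges over primes up to a sieving level $z$.

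The first step is to assemble the required equidistribution of $y$-smooth integers in arithmetic progressions: for primes $q\le z$ and $(a,q)=1$,
$$
\#\{m\le t:m\text{ is }y\text{-smooth},\ m\equiv a\pmod q\}=\frac{\psi(t,y)}{q}\bigl(1+o(1)\bigr),
$$
together with analogous estimates for squarefree moduli that are products of primes up to $z$. In the range $y\ge\exp(\sqrt{\log t\log_2 t}\,)$ one has $u_t\le\sqrt{\log t/\log_2 t}$, placing us firmly in the Hildebrand regime, so estimates of this quality are available from the work of Fouvry and Tenenbaum on smooth numbers in progressions.

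The second step is to apply a small sieve (for instance Brun's sieve, or the fundamental lemma) to $\mathcal{A}=\{m\le t:m\text{ is }y\text{-smooth}\}$, removing the residue classes $-1\pmod q$ for primes $q\le z$. Taking $z=y^{\delta}$ for a suitable small constant $\delta>0$, the sieve gives
$$
\psi_\pi(t,y)\ll\psi(t,y)\prod_{q\le z}\left(1-\frac{1}{q}\right)+\text{(sieve remainder)},
$$
where Mertens' theorem bounds the main term by $\psi(t,y)/\log y$ and the remainder is absorbed by the equidistribution estimates. Combined with $\psi(t,y)\ll t\rho(u_t)$ from the smooth-number asymptotics, this yields
$$
\psi_\pi(t,y)\ll\frac{\rho(u_t)\,t}{\log y}=\frac{u_t\rho(u_t)\,t}{\log t},
$$
which is the stated bound.

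The main obstacle is the equidistribution input. To ensure the sieve remainder is genuinely smaller than $\psi(t,y)/\log y$, one needs uniformity of $y$-smooth integers in arithmetic progressions over moduli reaching at least a small positive power of $y$, with individual errors of size $o(\psi(t,y)/(q\log y))$. This is a nontrivial ingredient to import, and the lower bound $y\ge\exp(\sqrt{\log t\log_2 t}\,)$ in the hypothesis is exactly what brings the required uniformity within reach of the known smooth-numbers-in-progressions technology.
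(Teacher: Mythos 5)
The paper does not actually prove Lemma~\ref{lem:Smooth Shift}: it is quoted verbatim from the literature, namely \cite[Theorem~1]{PS} (and is also recorded as \cite[Equation~(2.3)]{BFPS}), so there is no in-paper proof to compare your sketch against. That said, your sieve-theoretic reconstruction is the standard way such bounds are obtained and is consistent in outline with the argument underlying the cited result: write $p-1=m$ with $m$ a $y$-smooth integer, observe that for $p>y$ primality forces $m\not\equiv-1\pmod q$ for each prime $q\le z$, sieve the smooth integers $m\le t$ by these classes using the fundamental lemma (or Brun/Selberg), feed in a Bombieri--Vinogradov-type equidistribution statement for $y$-smooth numbers in progressions to control the remainder, and use $\psi(t,y)\ll t\rho(u_t)$ together with Mertens for the main term. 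The choice $z=y^{\delta}$ gives $\prod_{q\le z}(1-1/q)\asymp1/\log y$, which is exactly why the resulting upper bound carries the factor $1/\log y$ (equivalently the extra $u_t$) rather than the heuristically expected $1/\log t$.

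Two points in your write-up deserve tightening. First, $y$-smooth integers do \emph{not} equidistribute among all residues mod $q$ (the class $0\bmod q$ is overweighted), so the sieve density function is not literally $1/q$; what one actually needs is equidistribution among residues coprime to $q$, i.e.\ $\psi(t,y;q,-1)\approx\psi_q(t,y)/\varphi(q)$, and after accounting for the proportion of smooth integers coprime to $q$ the effective density still works out to roughly $1/q$, with a controlled multiplicative correction, so the Mertens computation survives. Second, you should verify that the primes $p\le y$ you discard are indeed absorbed: one needs $\pi(y)\ll t\rho(u_t)/\log y$, i.e.\ $y^{\,u_t-1}\rho(u_t)\gg1$, which does hold in the stated range but is worth a line. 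Also, the fundamental lemma needs the equidistribution input only \emph{on average} over moduli $d\le y^{\delta K}$, not individually for each $q$ with error $o(\psi(t,y)/(q\log y))$; the weaker averaged form is what the Fouvry--Tenenbaum/Granville results actually supply and what the sieve actually requires. With those adjustments your sketch is a faithful account of the argument behind the cited lemma.
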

 It is useful to observe that the range in Lemma~\ref{lem:Smooth Shift} includes the range 
$$
y \le t\le y^{\log y/2\log_2 y}.
$$

\subsection{Bound on $\Phi(x,y)$ for $(\log_2 x)^{1+\varepsilon}\le y\le x^{1/\log_2 x}$}

We give a proof of the following result.
\begin{proposition}
\label{prop:largeu}
Fix $\varepsilon>0$.
For  $(\log_2 x)^{1+\varepsilon}\le y\le x^{1/\log_2 x}$, and 
$u=\log x/\log y \to\infty$, we have
$$
\Phi(x,y)\le x \exp(-u(\log_2u+\log_3 u+o(1))). 
$$
\end{proposition}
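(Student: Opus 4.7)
The plan is to apply a variant of Rankin's trick. Any $n$ with $\varphi(n)$ $y$-smooth decomposes multiplicatively: primes $p\le y$ may appear to arbitrary powers, while primes $p>y$ must lie in $\mathcal{P}_y = \{p>y:P(p-1)\le y\}$ and appear to at most the first power (since for $p>y$ with $p^2\mid n$ we would have $p\mid\varphi(n)$ and hence $p\le y$, a contradiction). Hence for any $c = 1 - \alpha \in (0,1)$,
\[
\Phi(x,y)\;\le\;x^{c}\sum_{n \in S_y}n^{-c}\;=\;x^{c}\prod_{p\le y}\bigl(1-p^{-c}\bigr)^{-1}\prod_{p \in \mathcal{P}_y}\bigl(1+p^{-c}\bigr),
\]
where $S_y = \{n:\varphi(n)\text{ is }y\text{-smooth}\}$.

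Taking logarithms, the bound reduces to estimating $\Sigma_1 = \sum_{p \le y} p^{-c}$ and $\Sigma_2 = \sum_{p\in\mathcal{P}_y}p^{-c}$. For $\Sigma_1$, Mertens' theorem combined with partial summation yields
$\Sigma_1 \le \log_2 y + O(1) + O(y^\alpha/(\alpha\log y))$;
correctly handling this estimate for the larger values of $y$ (where the second term can be non-negligible) is the point of the present note, the original argument in~\cite{BFPS} having mishandled this bound. For $\Sigma_2$, Abel summation using Lemma~\ref{lem:Smooth Shift} gives $\Sigma_2\ll\int_1^\infty y^{s\alpha}\rho(s)\,ds$, and Lemma~\ref{lem:useful} applied with $A = y^\alpha \ge 2$ provides the bound $\Sigma_2 \ll \exp\bigl((1+\eta)y^\alpha/(\alpha\log y)\bigr)$.

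The crucial choice is $\alpha$ satisfying $\alpha\log y = \log_2 u + \log_3 u$, equivalently $y^\alpha = \log u\cdot \log_2 u$. This makes $-\alpha u\log y = -u(\log_2 u + \log_3 u)$ match exactly the main term of the target exponent, while $y^\alpha/(\alpha\log y) = (\log u)(1+o(1))$ remains moderate. In the range $(\log_2 x)^{1+\varepsilon}\le y\le x^{1/\log_2 x}$ we have $\log_2 y\le\log_2 x\le u$ and $\log u=o(u\log_2 u)$, so the error from $\Sigma_1$ is of order $\log_2 y + \log u = o(u(\log_2 u+\log_3 u))$ and is negligible compared to the main term.

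The principal obstacle is showing that the shift-sum contribution $\exp((1+\eta)y^\alpha/(\alpha\log y))$ is absorbed into the required $o(u)$ correction in the exponent. Applying Lemma~\ref{lem:useful} with a fixed $\eta>0$ naively yields $u^{1+\eta}$, which overwhelms the main term. The resolution is to exploit the fact that $A = y^\alpha\to\infty$ throughout our range, allowing the $(1+\eta)$ in Lemma~\ref{lem:useful} to be sharpened to $(1+o(1))$ by choosing $\eta=\eta(u)\to 0$ at a rate compatible with the ``$A$ large enough'' hypothesis, together with a saddle-point analysis of $\int_1^\infty y^{s\alpha}\rho(s)\,ds$ that tracks the lower-order corrections around the peak at $s^\ast \approx \log u$. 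Verifying this delicate cancellation uniformly throughout the stated range of $(x,y)$ is where the bulk of the technical work lies.
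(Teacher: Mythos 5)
Your Rankin setup and the split into $\Sigma_1=\sum_{p\le y}p^{-c}$ and $\Sigma_2=\sum_{p\in\mathcal P_y}p^{-c}$ closely follows the paper's, as does the idea of converting $\Sigma_2$ to $\sum_n A^n\rho(n)$ with $A=y^{1-c}$ via Lemma~\ref{lem:Smooth Shift} and partial summation. However, your choice of $\alpha=1-c$ so that $\alpha\log y=\log_2u+\log_3u$ \emph{exactly} is the crux of the problem, and the ``delicate cancellation'' you defer at the end is not a technical detail but a fatal gap. With $A=y^\alpha=\log u\log_2u$, the sum $\sum_n A^n\rho(n)$ has its peak near $n\approx\log u$, where the term is $A^n\rho(n)\approx\exp\!\bigl(n(\log A-\log n-\log_2 n+1)\bigr)\approx e^{\log u}=u$; therefore $\Sigma_2\asymp u^{1+o(1)}$, and no sharpening of $\eta$ to $o(1)$, nor any saddle-point refinement, can make this $o(u)$, because the asymptotic order $\exp((1+o(1))A/\log A)$ is tight. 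The Euler factor $\exp(\Sigma_2)$ thus contributes roughly $e^u$, so the exponent of the final bound becomes $-u(\log_2u+\log_3u)+u+o(u)=-u(\log_2u+\log_3u-1+o(1))$, which is strictly weaker than the claimed $-u(\log_2u+\log_3u+o(1))$. The paper avoids this by choosing $c=1-(\log_2u+\log_3u-\delta)/\log y$ with a \emph{fixed} $\delta>0$; then $A=e^{-\delta}\log u\log_2u$ and the Lemma~\ref{lem:useful} bound (with $\eta=\delta$) gives $\Sigma_2\ll\exp\bigl((1+\delta)e^{-\delta}\log u\bigr)=u^{1-\delta'}=o(u)$ because $(1+\delta)e^{-\delta}<1$; the $\delta$ loss in $x^c$ is sent to $0$ only afterward.

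There is a second omission: Lemma~\ref{lem:Smooth Shift} is only valid in the range $y\le t\le y^{\log y/2\log_2y}$, so your Abel-summation passage from $\Sigma_2$ to $\int y^{s\alpha}\rho(s)\,ds$ is not justified for primes $p$ beyond $y^z$ with $z=\log y/(2\log_2y)$. The paper treats that tail separately, ignoring the primality of $p$ and using de Bruijn's $\psi(t,y)\le t/e^{u_t\log u_t}$ when $y\ge\exp((\log_2x)^2)$, and a second Euler product over $p\le y$ only (with the $\li$-estimate from~\cite{P}) when $y$ is smaller. Your argument as written does not address this split of the range, and it is needed: without it there is no admissible bound for $\psi_\pi(t,y)$ for $t>y^z$.
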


\begin{proof}
We may assume that $u$ is large and shall need to do so at various 
points in the proof.  We may also assume that  $\varepsilon<1$.
Let $\delta>0$ be arbitrarily small but fixed.  We  prove that 
$$\Phi(x,y)\le x\exp(-u(\log_2u+\log_3u-\delta+o(1)))\qquad (u\to\infty),$$
which is sufficient for the desired result. 

Put 
$$
c=1-(\log_2 u+\log_3 u-\delta)/\log y,
$$ 
so that $c<1$ for $u$ sufficiently large.  Also, $u<\log x$ implies that
$$
1-c=\frac{\log_2 u+\log_3u-\delta}{\log y}<\frac{\log_3 x+\log_4 x}{(1+\varepsilon)\log_3 x}<1-\frac\varepsilon 2,
$$
for $u$ sufficiently large, so we may assume that $1>c>\varepsilon/2$.
We have
\begin{equation}
\label{eq:rankin}
\Phi(x,y)\le x^c\sum_{\substack{n\le x\\P(\varphi(n))\le y}}\frac1{n^c}
\le x^c\prod_{\substack{p\le x\\P(p-1)\le y}}\left(1-\frac1{p^c}\right)^{-1}.
\end{equation}
Note that $x^c=x\exp(-u(\log_2 u+\log_3 u-\delta))$, so via~\eqref{eq:rankin} it suffices
to prove that
\begin{equation}
\label{eq:suff}
-\sum_{\substack{p\le x\\P(p-1)\le y}}\log\left(1-\frac1{p^c}\right)=o(u),
\end{equation}
as $u\to\infty$.  This implies that, using $c>\varepsilon/2$,
$$
-\sum_{\substack{p\le x\\P(p-1)\le y}}\log\left(1-\frac1{p^c}\right)=\sum_{\substack{p\le x\\P(p-1)\le y}}\sum_{k\ge1}\frac1{kp^{ck}}
\ll \sum_{\substack{p\le x\\P(p-1)\le y}}\frac1{p^c}.
$$
Hence, to establish~\eqref{eq:suff} and hence 
the desired result, it is sufficient to show that, as $u\to\infty$,
\begin{equation}
\label{eq:suff2}
\sum_{\substack{p\le x\\P(p-1)\le y}}\frac1{p^c}=o(u).
\end{equation}

Put
\begin{equation}
\label{eq:y1y2}
\begin{aligned}
z=\frac{\log y}{2\log_2y},
\end{aligned}
\end{equation}
and consider primes $p\le x$ with $P(p-1)\le y$ in two ranges:
\begin{enumerate}
\item{}
$p\le y^z$, 
\item{} $p>y^z$.
\end{enumerate}
Note that the second range contains primes only in the case that $y^z\le x$.

To estimate the first range for $p$, we have
$$
\sum_{\substack{p\le y^z\\P(p-1)\le y}}\frac1{p^c}
\le\sum_{1\le k< z+1}\,\sum_{\substack{y^{k-1}<p\le y^k \\P(p-1)\le y}}\frac1{p^c}.
$$
For the inner sum we use Lemma~\ref{lem:Smooth Shift} together with partial summation and 
the fact that  
$y^{1-c} = e^{-\delta}\log u\log_2u$ getting that
\begin{align*}
\sum_{\substack{y^{k-1}<p\le y^k \\P(p-1)\le y}}\frac1{p^c} &
\ll\rho(k)\frac{y^{k(1-c)}}{\log y}
+\int_{y^{k-1}}^{y^k}\rho(k-1)\frac1{t^c\log y}\,dt\\
& \ll\rho(k-1)\frac{y^{k(1-c)}}{(1-c)\log y} 
\ll\rho(k-1)\( e^{-\delta}\log u\log_2u\)^k.
\end{align*} 
We use Lemma~\ref{lem:useful} with $A=e^{-\delta}\log u\log_2u$ and 
$\eta=\delta$, finding that
$$
\sum_{\substack{p\le y^z\\P(p-1)\le y}}\frac1{p^c}\ll A\exp\left(\frac{(1+\delta)A}{\log A}\right).
$$  
Since $(1+\delta)A/\log A\sim (1+\delta)e^{-\delta}\log u$ as $u\to\infty$, and
$(1+\delta)e^{-\delta}<1$, this shows that the sum in~\eqref{eq:suff2} is $O(u^{1-\delta'})$
for some $\delta'>0$ depending on the choice of $\delta$.  Thus we have~\eqref{eq:suff2} for primes in the first range.

Now we turn to the second range. As mentioned earlier, we may assume that
$y^z\le x$.  By de Bruijn~\cite[(1.6)]{dB1} we have
\begin{equation}
\label{eq:Brun}
\psi(t,y)\le t/e^{u_t\log u_t}\qquad(y^z<t\le x), 
\end{equation}
where $u_t$ is as in Lemma~\ref{lem:Smooth Shift},
for $u$ sufficiently large.  Ignoring that $p$ is prime we have the bound
\begin{equation}
\label{eq:2nd}
\sum_{\substack{y^z<p\le x\\P(p-1)\le y}}\frac1{p^c}\le\sum_{\substack{y^z-1<n\le x\\P(n)\le y}}\frac1{n^c}\le 
1+ \sum_{z+1\le k \le u}\,\sum_{\substack{y^{k-1}<n\le y^k \\P(n)\le y}}\frac1{n^c}.
\end{equation}

Next, we put
$$
y_0=\exp\left((\log_2 x)^{2}\right)
$$
and consider separately the cases $y\ge y_0$ and $y<y_0$.
In the case that $y\ge y_0$,
using~\eqref{eq:Brun} the inner sum on the right side of~\eqref{eq:2nd} satisfies
\begin{align*}
\sum_{\substack{y^{k-1}<n\le y^k \\P(n)\le y}}\frac1{n^c} &
\le \frac{\psi(y^k,y)}{y^{kc}}+\int_{y^{k-1}}^{y^k}\frac{c\,\psi(t,y)}{t^{c+1}}\,dt\\
&\le k^{-k}y^{k(1-c)}+(k-1)^{-(k-1)}\int_{y^{k-1}}^{y^k}t^{-c}\,dt\\
&\ll k^{-(k-1)}\frac{y^{k(1-c)}}{1-c}\\
&\le k\log y\cdot\exp(-k(\log k-\log_2u-\log_3u+\delta)).
\end{align*}
Since $y\ge y_0$, $k\ge z$,  with  $z$ given by~\eqref{eq:y1y2}, and $u<\log x$, 
we have 
\begin{align*}
\log k-\log_2u-\log_3u &\ge \log z-\log_2u-\log_3u \\
& \ge\log_2 y - \log_3y - \log 2 - \log_2u-\log_3u\\
& \ge \frac{7}{8} \log_2 y   - \log_2u-\log_3u\\
& \ge \frac{7}{4} \log_3 x   - \log_2u-\log_3u >  \frac{1}{2} \log_3 x
\end{align*}
provided that $u$ is large. Hence, 
$$
\sum_{\substack{y^{k-1}<n\le y^k \\P(n)\le y}}\frac1{n^c} \ll \exp(-k)\log y
$$
and so the sum in~\eqref{eq:2nd} is $O\(\exp(-z)\log y\) = O(1)$.

It remains to handle the second range when $y<y_0$.  In this case,
we use an Euler product for a second time, getting that
\begin{align*}
\sum_{\substack{n\le x\\P(n)\le y}}n^{-c}&<\prod_{p\le y}\left(1-p^{-c}\right)^{-1}
\ll\exp\left(\sum_{p\le y}p^{-c}\right)\\
&=\exp\left(\li(y^{1-c})(1+O(1/\log y))+O(|\log(1-c)|)\right),
\end{align*}
where we have used~\cite[Equation~(2.4)]{P} in the last step.
Now
$$
\li(y^{1-c})=(1+o(1))\frac{y^{1-c}}{(1-c)\log y}=\frac{1+o(1)}{e^\delta}\log u,
$$
as $u\to\infty$, and 
$$
|\log(1-c)|<\log_2y<2\log_3x\ll\log_2u.
$$
Therefore
$$
\sum_{\substack{n\le x\\P(n)\le y}}n^{-c}\le u^{e^{-\delta/2}}
$$
for $u$ sufficiently large.  This completes the proof.
\end{proof}

\section{Large $y$}

\subsection{A version of the Hildebrand identity}

We begin this section by proving an analog of the Hildebrand identity which is 
adapted to our function  $\Phi(x,y)$.  Note that it is given as an
inequality, but it would not be hard to account for the excess on the higher side.
\begin{lemma}
\label{lem:ajh}
For $x\ge y\ge2$ we have
$$
\Phi(x,y)\le\frac1{\log x}\int_1^x\frac{\Phi(t,y)}{t}\,dt+\frac1{\log x}\sum_{\substack{d\le x\\P(\varphi(d))\le y}}\Phi\left(\frac xd,y\right)\Lambda(d).
$$
\end{lemma}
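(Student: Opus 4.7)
The plan is to imitate Hildebrand's original identity for $\psi(x,y)$, starting from the elementary fact $\log n=\sum_{d\mid n}\Lambda(d)$. Summing this over $n\le x$ with $P(\varphi(n))\le y$ and swapping the order of summation gives
$$
\sum_{\substack{n\le x\\ P(\varphi(n))\le y}}\log n \;=\; \sum_{d\le x}\Lambda(d)\,N_d,
\qquad N_d:=\#\bigl\{m\le x/d:P(\varphi(dm))\le y\bigr\}.
$$
Since $\Lambda$ is supported on prime powers $d=p^k$, and since $\varphi(p^k)\mid\varphi(p^km)$ for every $m\ge 1$, any $d$ for which $N_d\ne 0$ automatically satisfies $P(\varphi(d))\le y$, so the outer sum may be restricted to the range appearing in the lemma. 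Separately, the identity $\log x-\log n=\int_n^x dt/t$ together with Fubini yields
$$
\Phi(x,y)\log x-\sum_{\substack{n\le x\\ P(\varphi(n))\le y}}\log n=\int_1^x\frac{\Phi(t,y)}{t}\,dt.
$$

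Combining these two identities, the lemma reduces to the pointwise bound $N_d\le\Phi(x/d,y)$ for every prime power $d=p^k$ with $P(\varphi(d))\le y$. To establish it I would fix such a $d$, take any admissible $m$, and write $m=p^jm'$ with $p\nmid m'$; then
$$
\varphi(dm)=p^{k+j-1}(p-1)\varphi(m'),
$$
so $\varphi(dm)$ and $\varphi(m)$ share the same set of prime divisors apart from possibly $p$ itself (the factor $p-1$ already divides $\varphi(d)$ and is $y$-smooth by hypothesis). Consequently, if $p\le y$ — which is automatic whenever $k\ge 2$, since then $p\mid\varphi(d)$ — the conditions $P(\varphi(dm))\le y$ and $P(\varphi(m))\le y$ are equivalent, giving $N_d=\Phi(x/d,y)$. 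Otherwise $k=1$ and $p>y$; then $P(\varphi(pm))\le y$ forces $p\nmid m$ (else $p\mid\varphi(pm)$ with $p>y$), after which $\varphi(pm)=(p-1)\varphi(m)$ yields $P(\varphi(m))\le y$, so still $N_d\le\Phi(x/d,y)$.

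Dividing the resulting inequality by $\log x$ delivers the lemma. I do not foresee a real obstacle; the only point requiring care is the prime-power case analysis just given, which must correctly handle the edge case $d=p>y$ where $p\mid m$ is excluded rather than directly permitted by smoothness equivalence.
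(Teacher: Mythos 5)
Your proof is correct and takes essentially the same route as the paper's: partial summation for $\sum\log n$, the substitution $\log n=\sum_{d\mid n}\Lambda(d)$, and bounding the inner count $N_d$ by $\Phi(x/d,y)$. The only difference is that the paper invokes (tacitly) the standard fact that $a\mid b$ implies $\varphi(a)\mid\varphi(b)$, which yields at once both the restriction to $d$ with $P(\varphi(d))\le y$ and the bound $N_d\le\Phi(x/d,y)$, whereas your explicit prime-power case analysis re-derives the same information more laboriously.
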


\begin{proof}
By partial summation, we have
\begin{equation}
\label{eq:ps}
\sum_{\substack{n\le x\\P(\varphi(n))\le y}}\log n=\Phi(x,y)\log x-\int_1^x\frac{\Phi(t,y)}{t}\,dt.
\end{equation}
On the other hand, we have
\begin{align*}
\sum_{\substack{n\le x\\P(\varphi(n))\le y}}\log n& =
\sum_{\substack{n\le x\\P(\varphi(n))\le y}}\sum_{d\,|\,n}\Lambda(d)
=\sum_{\substack{d\le x\\P(\varphi(d))\le y}}\sum_{\substack{m\le x/d\\P(\varphi(md))\le y}}\Lambda(d)\\
&\le
\sum_{\substack{d\le x\\P(\varphi(d))\le y}}\Phi\left(\frac xd,y\right)\Lambda(d).
\end{align*}
Substituting~\eqref{eq:ps} on the left side and solving the resulting inequality for $\Phi(x,y)$ gives the result.
\end{proof}

\subsection{Bound on $\Phi(x,y)$ for  $y\ge\exp(\sqrt{\log x\log_2 x}\,)$}

\begin{proposition}
\label{prop:smallu}
For 
$y\ge\exp(\sqrt{\log x\log_2 x}\,)$,
and
$u=\log x/\log y\to\infty$, we have
$$
\Phi(x,y)\le x \exp(-u(\log_2u+\log_3 u+o(1))).
$$
\end{proposition}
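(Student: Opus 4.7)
The plan is to use the identity in Lemma~\ref{lem:ajh} combined with an induction on $x$, mirroring Hildebrand's classical analysis of $\psi(x,y)$. Writing $\PP:=\{p\text{ prime}: P(p-1)\le y\}$, the identity yields
$$
\Phi(x,y)\log x \le \int_1^x \frac{\Phi(t,y)}{t}\,dt + S(x) + T(x),
$$
where $S(x)=\sum_{p\in\PP,\,p\le x}\Phi(x/p,y)\log p$ and $T(x)$ collects the prime powers $p^k$ with $k\ge 2$ and $p\le y$. The trivial estimate $\Phi(x/p^k,y)\le x/p^k$ yields $T(x)\ll x\sum_{p\le y}\log p\sum_{k\ge 2}p^{-k}\ll x$, which is negligible on the target scale.

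Fix $\delta>0$ and set $\sigma_\delta(u):=\exp(-u(\log_2 u+\log_3 u-\delta))$. I would induct on $x$, assuming $\Phi(t,y)\le t\sigma_\delta(u_t)$ for all $t<x$ with $u_t\ge u_0(\delta)$. Under this hypothesis the substitution $t=y^v$ gives $\int_1^x\Phi(t,y)/t\,dt\le\log y\int_0^u y^v\sigma_\delta(v)\,dv$; since $\log y$ vastly dominates $\log_2 v$ in the range $y\ge\exp(\sqrt{\log x\log_2 x})$, the integrand is sharply increasing with logarithmic derivative $\asymp\log y$, and the integral is $O(x\sigma_\delta(u))$, much smaller than the target $x\sigma_\delta(u)\log x$.

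The heart of the argument is bounding $S(x)$. I would split the primes at the threshold $u-u_p=u_0$: for primes close to $x$ (with $u-u_p<u_0$), Lemma~\ref{lem:Smooth Shift} bounds their count by $\ll\rho(u)x/\log y$, and the trivial $\Phi(x/p,y)\le x/p$ gives a total contribution $\ll xu\rho(u)$, which is $o(x\sigma_\delta(u)\log x)$ since $\rho(u)/\sigma_\delta(u)\to 0$. For the main range $u-u_p\ge u_0$, the induction yields $\Phi(x/p,y)\le(x/p)\sigma_\delta(u-u_p)$; partial summation against $\theta^*(t):=\sum_{p\in\PP,\,p\le t}\log p$, which Lemma~\ref{lem:Smooth Shift} bounds by $\theta^*(t)\ll u_t\rho(u_t)t$, combined with the substitution $t=y^v$ converts the main portion of $S(x)$ into
$$
S(x)\ll x\log y\int_0^u v\,\rho(v)\,\sigma_\delta(u-v)\,dv.
$$
Using the approximation $\sigma_\delta(u-v)/\sigma_\delta(u)\approx(\log u\cdot\log_2 u\cdot e^{-\delta})^v$ for $v=o(u)$ together with the asymptotic~\eqref{eq:rho} for $\rho$, a saddle-point analysis in the spirit of Lemma~\ref{lem:useful} locates the maximum at $v_0\sim e^{-\delta}\log u$ with peak value on the order of $\sigma_\delta(u)u^{e^{-\delta}}$, giving $S(x)\ll x\log y\cdot\sigma_\delta(u)u^{e^{-\delta}}(\log u)^{O(1)}$. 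Dividing through by $\log x=u\log y$ produces $\Phi(x,y)\ll x\sigma_\delta(u)u^{e^{-\delta}-1}(\log u)^{O(1)}$, which is $o(x\sigma_\delta(u))$ for fixed $\delta>0$ as $u\to\infty$; this closes the induction, and letting $\delta\to 0$ recovers the $+o(1)$ in the exponent of the stated bound.

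The main obstacle will be the saddle-point evaluation of $\int_0^u v\rho(v)\sigma_\delta(u-v)\,dv$: the extra $v$-factor (arising from the $\log p$ weight in Hildebrand's identity) combined with an approximate form of the Lamzouri integral equation $u\sigma(u)=\int_0^u\sigma(u-v)\rho(v)\,dv$ is precisely what produces the critical savings $u^{e^{-\delta}-1}<1$ for $\delta>0$, and matching this exponent requires careful tracking of the secondary terms in the expansion~\eqref{eq:rho} of $\rho$.
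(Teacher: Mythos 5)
Your proposal is correct and follows essentially the same route as the paper: the Hildebrand-type identity of Lemma~\ref{lem:ajh}, an induction (bootstrapping) on the ratio $\Phi(x,y)/(x g(u))$, Lemma~\ref{lem:Smooth Shift} plus partial summation to convert the prime sum into the integral $\int v\rho(v)\sigma_\delta(u-v)\,dv$, and a saddle-point analysis locating the maximizer at $v\asymp e^{-c\delta}\log u$ to obtain the critical savings $u^{-c'\delta}$. The only point you gloss over is that the induction hypothesis must carry a (possibly large) absolute constant — the paper tracks this via $\gamma_{u_0}=\sup_{3\le v\le u_0}\Gamma_v$ — since $\Phi(t,y)\le t\sigma_\delta(u_t)$ need not hold verbatim for the base-case range $u_t$ near $u_0$; with that constant carried through, the argument closes exactly as you describe.
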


\begin{proof}
Let $\delta>0$ be arbitrarily small but fixed, and put 
$$
g(u)=\exp(-u(\log_2u+\log_3u-\delta)).
$$
It suffices to show that $\Phi(x,y)\ll xg(u)$
for $x,y$ in the given range. 

For any given $u\ge3$, which without loss of generality we may assume, let $\Gamma_u$ be the supremum of
$\Phi(x,y)/(xg(u))$ for all $x,y$ with $y=x^{1/u}$,
so that trivially $\Gamma_u \le 1/g(u)$. Further, let 
$$
\gamma_u=\sup\{\Gamma_v:3\le v\le u\}.
$$
Our goal is to show that $\gamma_u$ is bounded.  Towards this end, we may assume
that $u\ge u_0\ge3$, where $u_0$ is a suitably large constant, depending on the choice of $\delta$.
 Since $\gamma_u$ is nondecreasing as a function of $u$, we may assume that
\begin{equation}
\label{eq:kingcrimson}
\gamma_u\ge 1\qquad(u\ge u_0), 
\end{equation}
for otherwise $\gamma_u$ is clearly bounded.
We further assume that $u_0$ is large enough so that
\begin{equation}
\label{eq:jarre}
\frac1{\log v}+\frac1{\log v\log_2v}\le\delta\qquad(v\ge u_0). 
\end{equation}

Let $N$ be such that
$$
u_0\le N\le \exp(\sqrt{\log x/\log_2 x}\,)-1.
$$
We claim that for $u_0$ large enough
\begin{equation}
\label{eq:Claim}
\sup_{N<u\le N+1} \Gamma_u \le \gamma_N.
\end{equation}
 By induction, this implies that
$\gamma_u\le\gamma_{u_0}$ for all $u\ge u_0$, and therefore
$$
\Phi(x,y)\le\gamma_{u_0}xg(u)
$$ 
for all $u\ge u_0$, and the result would follow. 

One other observation is that $g(u)\sim e^{-\delta} g(u+1)\log u\log_2u$
as $u\to\infty$, so that with $u_0$ large and $u_0\le N<u\le N+1$, we have
\begin{equation}
\label{eq:gugn}
g(N)\le g(u)\log u\log_2u~\hbox{ and }~g(N-1)\le g(u)(\log u\log_2u)^2.
\end{equation}

To establish~\eqref{eq:Claim} we first consider the term
$$
\fT_1=\frac1{\log x}\int_1^x\frac{\Phi(t,y)}t\,dt
$$
in Lemma~\ref{lem:ajh}.  We split the range of integration as follows:
$$
\int_1^x=\int_1^{y^{u_0}}+\int_{y^{u_0}}^{y^N}+\int_{y^N}^x.
$$

We have trivially that
\begin{equation}
\label{eq:term1-1}
\int_1^{y^{u_0}}\frac{\Phi(t,y)}t\,dt< y^{u_0}.
\end{equation}
We show that for $u_0$ sufficiently large, we have 
\begin{equation}
\label{eq:term1-1+}
y^{u_0}\le xg(u)/g(u_0).
\end{equation}
Since $y^{u_0}=x^{u_0/u}$, \eqref{eq:term1-1+} is equivalent to showing that for 
$$
D(u)=\(1-\frac{u_0}u\)\log x-\log g(u_0)- u(\log_2u+\log_3u-\delta)\ge0,
$$
we have
\begin{equation}
\label{eq:term1-1++}
D(u)\ge0.
\end{equation}
Note that the hypothesis $y\ge\exp(\sqrt{\log x\log_2x})$ implies 
that $\log x >u^2(\log_2u+\log_3u)$.
By considering $D'(u)$
and using \eqref{eq:jarre}, we see that $D(u)$ is increasing for $u\ge u_0$
and $u_0$ sufficiently large.  Since $D(u_0)=0$, this proves~\eqref{eq:term1-1++}, which
establishes~\eqref{eq:term1-1+}, and so via~\eqref{eq:term1-1} we have
\begin{equation}
\label{eq:term1-1final}
\int_1^{y^{u_0}}\frac{\Phi(t,y)}t\,dt\le xg(u)/g(u_0).
\end{equation}

Also,
$$
\int_{y^{u_0}}^{y^N}\frac{\Phi(t,y)}t\,dt \le \gamma_N I,
$$
where 
$$
I= \int_{y^{u_0}}^{y^N}g(\log t/\log y)\,dt
= \int_{u_0}^N g(v)y^v\log y\,dv 
= \int_{u_0}^N g(v)\,d(y^v) .
$$  
Thus, $I$ is equal to
\begin{align*}
y^vg(v)\Big|_{u_0}^N &
+\int_{u_0}^N\left(\log_2 v+\log_3v
-\delta+\frac1{\log v}+\frac1{\log v\log_2v}\right)g(v)y^v\,dv\\
&\qquad\qquad\qquad \qquad \qquad \qquad \quad <y^Ng(N)+\frac{\log_2N+\log_3N}{\log y}I,
\end{align*}
where we have used~\eqref{eq:jarre} in the last step.
Assuming $u_0$ is sufficiently large (and thus so are $x$  and $y$), we see that
\begin{equation}
\label{eq:term1-2}
\int_{y^{u_0}}^{y^N}\frac{\Phi(t,y)}t\,dt<2\gamma_Ny^Ng(N). 
\end{equation}

Finally,
\begin{equation}
\label{eq:term1-3}
\int_{y^N}^x\frac{\Phi(t,y)}t\,dt \le\int_{y^N}^x\frac{\Phi(t,t^{1/N})}t\,dt
\le \gamma_N g(N)(x-y^N).
\end{equation}

Thus, using~\eqref{eq:term1-1final}, \eqref{eq:term1-2},  and  \eqref{eq:term1-3},
 we have 
\begin{equation}
\begin{split}
\label{eq:term1}
\fT_1
& \le\frac{xg(u)}{g(u_0)\log x}+\frac {2\gamma_Nx}{\log x}g(N)\\
& \le \frac{2\gamma_N \log u\log_2u+1/g(u_0)}{\log x}xg(u),
\end{split}
\end{equation}
assuming that $u_0$ is sufficiently large, where we used \eqref{eq:gugn} for the last step.

Next, we consider the second term
$$
\fT_2 =\frac1{\log x}\sum_{\substack{d\le x\\P(\varphi(d))\le y}}\Phi\left(\frac xd,y\right)\Lambda(d)
$$
in Lemma~\ref{lem:ajh}, and begin
by estimating the contribution from terms $d\le y$.  For such $d$ we have
$y^{N-1} \le x/d$  (since $y^N\le x$), which implies that $y\le(x/d)^{1/(N-1)}$.
Hence, this part of $\fT_2$
is at most
\begin{align*}
\frac1{\log x}\sum_{d\le  y}\Phi\left(\frac x{d},y\right)\Lambda(d)
&\le\frac1{\log x}\sum_{d\le  y}
\Phi\left(\frac x{d},\(\frac xd\)^{1/(N-1)}\right)\Lambda(d)\\
&\le \frac{\gamma_Nx}{\log x}g(N-1)\sum_{d\le y}\frac{\Lambda(d)}{d}.
\end{align*}  
Hence, by the Mertens formula
\begin{equation}
\begin{split}
\label{eq:term2-1}
\frac1{\log x}\sum_{d\le  y}\Phi\left(\frac x{d},y\right)\Lambda(d)
&\le \frac{2\gamma_Nx\log y}{\log x}g(N-1) \\
&\le\frac{2\gamma_Nx(\log u\log_2u)^2}{u}g(u),
\end{split}
\end{equation}
assuming that $u_0$ is sufficiently large and using \eqref{eq:gugn}.

Next, we consider  the contribution from terms 
$d=p^a>y$ for which $p\le y$ (and thus the positive integer $a$ is at least two), 
finding from the trivial bound  $\Phi\(x/p^a,y\) \le x/p^a$ that
\begin{equation}
\label{eq:term2-1.5}
\frac1{\log x}\sum_{\substack{p\le  y\\p^a>y}}\Phi\left(\frac x{p^a},y\right)\log p
\le\frac{x}{\log x}\sum_{\substack{p\le y\\p^a>y}}\frac{\log p}{p^a}
\ll\frac{x}{\sqrt{y}\log x}.
\end{equation}

The remaining terms are of the form $d=p^a$ with $p>y$, and since
$P(\varphi(d))\le y$ we conclude that $a=1$, i.e., $d=p$.
Therefore, we need to estimate
\begin{equation}
\label{eq:term2-2a}
\frac1{\log x}\sum_{\substack{y<p\le x\\P(p-1)\le y}}\Phi\left(\frac xp,y\right)\log p\\
=\frac1{\log x}\sum_{1\le k<u}S_k,
\end{equation}
where
$$
S_k=\sum_{\substack{y^k<p\le \min\{x,y^{k+1}\}\\P(p-1)\le y}}
\Phi\left(\frac xp,y\right)\log p.
$$
We also denote
$$
T_k=\sum_{\substack{y^k<p\le \min\{x,y^{k+1}\}\\P(p-1)\le y}}\frac{\log p}p.
$$
For integers $k\le u/2$ we use the bound
$$
S_k\le \gamma_N x g(u-k-1)T_k
\le \gamma_N x\log u\log_2u\cdot g(u-k)T_k,
$$
whereas for larger integers $k>u/2$, the trivial bound
$\Phi\(x/p,y\)\le x/p$ and~\eqref{eq:kingcrimson} together imply that
$$
S_k\le \gamma_N x T_k;
$$
consequently, using \eqref{eq:term2-2a},
\begin{equation}
\begin{split}
\label{eq:term2-2b}
&\frac1{\log x}\sum_{\substack{y<p\le x\\P(p-1)\le y}}\Phi\left(\frac xp,y\right) \log p\\
&\qquad\le\frac{\gamma_N x\log u\log_2u}{\log x}\sum_{1\le k\le u/2} g(u-k)T_k
+\frac{\gamma_N x}{\log x}\sum_{u/2<k<u}T_k.
\end{split}
\end{equation}

Next, define 
$$
h(k)=\exp(-k(\log k+\log_2(k{+}1)-1))
$$
and note that from~\eqref{eq:rho} we have
\begin{equation}
\label{eq:rho vs h}
k\rho(k)\ll h(k).
\end{equation}
By partial summation, using Lemma~\ref{lem:Smooth Shift}  together 
with~\eqref{eq:rho vs h}, we see that 
 there is an absolute constant $c_0$ such that for $1\le k<u$ we have
$$
T_k=\sum_{\substack{y^k< p\le \min\{x,y^{k+1}\}\\P(p-1)\le y}}\frac{\log p}p
 \le c_0h(k)\log y.
$$
Using this bound in~\eqref{eq:term2-2b} along with the simple bound
$$
h(k)\le \frac{g(u)}{u}\qquad(k>u/2)
$$
leads to
\begin{equation}
\begin{split}
\label{eq:term2-2c}
&\frac1{\log x}\sum_{\substack{y<p\le x\\P(p-1)\le y}}\Phi\left(\frac xp,y\right) \log p\\
&\qquad\le\frac{c_0\gamma_N x\log u\log_2u}{u}\sum_{1\le k\le u/2} g(u-k)h(k)
+\frac{c_0\gamma_N x}{u}g(u). 
\end{split}
\end{equation}

To bound the sum in~\eqref{eq:term2-2c}, we start with the estimate
\begin{equation}
\label{eq:tacos}
\log g(u-k)=-(u-k)(\log_2u+\log_3u-\delta)+O\(\frac{k}{\log u}\),
\end{equation}
which holds uniformly for $1\le k\le u/2$.  Using~\eqref{eq:tacos}
and assuming that $u_0$ is sufficiently large depending on $\delta$,
we derive that
\begin{equation}
\label{eq:bound-tiful1}
g(u-k)h(k)\le g(u) e^{B_u(k)}\qquad(1\le k\le u/2),
\end{equation}
where
$$
B_u(k)=k(\log_2u+\log_3u-\log k-\log_2(k{+}1)+1-\delta/2).
$$
Note that
\begin{align*}
\frac{d B_u(k)}{d k} & = \log_2u+\log_3u-\delta/2 \\
& \qquad \qquad - \log k-\log_2(k+1) - \frac{k}{(k+1)\log(k+1)}\\&  = \log\(e^{-\delta/2}\frac{\log u }{k}\)+ \log \frac{\log_2u}{\log (k+1)} - \frac{k}{(k+1)\log(k+1)}.
\end{align*}
Therefore, the function $B_u$ reaches its maximum for some $k =k_0$ with
$$
k_0=e^{-\delta/2}\log u+O\(\frac{\log u}{\log_2u}\)
$$
and, since for a constant $C> 0$ 
the derivative is bounded independently of $u$  for any $k$ in the interval
$$
k \in \left [e^{-\delta/2}\log u - C\frac{\log u}{\log_2u}, e^{-\delta/2}\log u+ C\frac{\log u}{\log_2u}\right],
$$ 
we obtain 
\begin{align*}
\max_{1\le k\le u/2}B_u(k)&=B_u\(e^{-\delta/2}\log u\)+O\(\frac{\log u}{\log_2u}\)\\
&=
e^{-\delta/2}\log u+O\(\frac{\log u}{\log_2u}\).
\end{align*}
This implies via~\eqref{eq:bound-tiful1} that
\begin{equation}
\label{eq:bound-tiful2}
\max_{1\le k\le u/2}g(u-k)h(k)\le g(u) u^{1-\delta/3}\qquad(1\le k\le u/2),
\end{equation}
if $u_0$ is sufficiently large.  Moreover, for any fixed constant $c>1$,
it is easy to see that $B_u$ is decreasing for $k\ge c\log u$ if $u_0$ is
sufficiently large depending on $\delta$ and $c$, and after a simple estimate
we have 
$$
\max_{c\log u\le k\le u/2}B_u(k)\le (c-c\log c)\log u.
$$
In particular, with $c=3$ (and noting that $3 -3 \log 3 = -0.295\cdots$), this implies via~\eqref{eq:bound-tiful1} that
\begin{equation}
\label{eq:bound-tiful3}
\max_{3\log u\le k\le u/2}g(u-k)h(k)\le g(u) u^{-1/4}.
\end{equation}
Splitting the range of the summation in~\eqref{eq:term2-2c}
according to whether $k\le 3\log u$ or $k>3\log u$, and
using~\eqref{eq:bound-tiful2} and~\eqref{eq:bound-tiful3}, respectively,
we have
\begin{align*}
\sum_{1\le k\le u/2} g(u-k)h(k)
&\le\sum_{1\le k\le 3\log u} g(u) u^{1-\delta/3}+\sum_{3\log u<k\le u/2} g(u) u^{-1/4}\\
&\le3g(u) u^{1-\delta/3}\log u+g(u) u^{3/4}\\
&\le g(u)u^{1-\delta/4}
\end{align*}
if $\delta$ is small enough and
$u_0$ sufficiently large.  Inserting this bound into~\eqref{eq:term2-2c},
it follows that
\begin{equation}
\begin{split}
\label{eq:term2-2}
\frac1{\log x}&\sum_{\substack{y<p\le x\\P(p-1)\le y}}\Phi\left(\frac xp,y\right) \log p\\
&\qquad\le\frac{c_0\gamma_N x\log u\log_2u}{u}g(u)u^{1-\delta/4}
+\frac{c_0\gamma_N x}{u}g(u)\\
&\qquad\le c_0\gamma_Nu^{-\delta/5}g(u)x,
\end{split}
\end{equation}
again assuming that $u_0$ is large.

Combining the bounds~\eqref{eq:term2-1}, \eqref{eq:term2-1.5} and~\eqref{eq:term2-2}
we obtain
\begin{equation}
\begin{split}
\label{eq:term2}
\fT_2 \le \frac{2\gamma_Nx(\log u\log_2u)^2}{u}g(u) & + c_0\gamma_Nu^{-\delta/5}g(u)x \\
& \qquad + O\(\frac{x}{\sqrt{y}\log x}\).
\end{split}
\end{equation}

We deduce from Lemma~\ref{lem:ajh} and the bounds~\eqref{eq:term1}  and~\eqref{eq:term2}, that for $u_0$ large,
$$
\Phi(x,y)\le\gamma_Ng(u)x.
$$
This establishes our claim \eqref{eq:Claim}, and the proposition is proved.
\end{proof}

\section{Comments} 
The bound of Proposition~\ref{prop:largeu}, taken at the lower range with
$y = (\log_2 x)^{1+\varepsilon}$, and thus with
$$
u = \frac{\log x}{(1+\varepsilon)\log_3 x}\,,
$$
 implies that 
\begin{align*}
\Phi\(x, (\log_2 x)^{1+\varepsilon}\)&\le  x \exp\(-\frac{\log x}{1+\varepsilon} +O\(\frac{\log x \log_4x}{\log_3 x}\)\)\\
& = x^{\varepsilon/(1+\varepsilon) +o(1)},
\end {align*}
Hence $\Phi\(x, \log_2 x\) = x^{o(1)}$. 
Although we do not have any lower bounds for this range that are much
better than the trivial bound $\Phi(x,y)\ge \psi(x,y)$, this does suggest the existence
of a phase transition near the point $y = \log_2 x$.  Using the same heuristic
as in Erd\H os \cite{E}, one should have quite small values of $y$ with $\Phi(x,y)=x^{1-o(1)}$.
In particular this should hold for any $y$ of the shape $(\log x)^\varepsilon$, with $\varepsilon>0$ fixed.
 It is interesting to recall that 
for the classical function $\psi(x,y)$ there is a well-known phase transition near the point $y = \log x$;
see~\cite{dB2}. 

\section*{Acknowledgement}
 
The authors are very grateful to Paul Kinlaw  for  pointing out a problem in our 
previous paper~\cite{BFPS}. 

 This work  was partially supported by NSERC (Canada) Discovery Grant A5123 (for J.F.) and the  Australian Research Council 
 Grant DP170100786 (for I.S.).

\bibliographystyle{amsplain}

\end{document}